\documentclass{kms-b}

\issueinfo{}
  {}
  {}
  {}
\pagespan{1}{}
\copyrightinfo{}
  {Korean Mathematical Society}

\usepackage{graphicx}
\allowdisplaybreaks

\theoremstyle{plain}
\newtheorem{theorem}{Theorem}[section]

\newtheorem{corollary}[theorem]{Corollary}

\theoremstyle{definition}
\newtheorem{definition}[theorem]{Definition}

\theoremstyle{remark}

\begin{document}

\title[Extension of Inner Derivations]
{On the Extension of Inner Derivations from Dense Ideals in Banach Algebras}

\author[H. Shafieasl]{Hamid Shafieasl}
\address{Hamid Shafieasl \\ Kahlert School of Computing \\ University of Utah \\ Utah, USA}
\email{h.shafieasl@utah.edu}

\author[A. M. Tavakkoli]{Amir Mohammad Tavakkoli}
\address{Amir Mohammad Tavakkoli \\ Kahlert School of Computing \\ University of Utah \\ Utah, USA}
\email{amir.tavakkoli@utah.edu}

\subjclass[2020]{Primary 47B47; Secondary 46H05, 46L05, 46M20}
\keywords{Inner derivations, dense ideals, compact operators, Schatten classes, Hochschild cohomology, approximate identities.}

\begin{abstract}
Let $A$ be a Banach algebra and $I$ a dense ideal in $A$. A natural question in the theory of operator algebras is whether the property that all derivations $D: A \to I$ are inner (implemented by elements in $I$) implies that all derivations $D: A \to A$ are inner (implemented by elements in $A$). We present a rigorous negative answer to this question. By utilizing the algebra of compact operators $A = K(H)$ and the dense ideal of finite-rank operators $I = F(H)$ on a separable infinite-dimensional Hilbert space $H$, we demonstrate that while every derivation into $F(H)$ is inner, there exist outer derivations on $K(H)$. Furthermore, we generalize this result to Schatten $p$-classes and discuss the cohomological implications and the role of approximate identities. Moreover, the main results and counterexamples presented in this paper have been formally verified using the Lean theorem prover.
\end{abstract}

\maketitle

\section{Introduction}

Let $A$ be a Banach algebra and $I$ be an ideal of $A$. A bounded linear map $D: A \to I$ is called a \textit{derivation} if it satisfies the Leibniz rule:
\begin{equation}
D(ab) = aD(b) + D(a)b, \quad \forall a,b \in A.
\end{equation}
A derivation is said to be \textit{inner} if there exists an element $x \in I$ such that for all $a \in A$,
\begin{equation}
D(a) = \operatorname{ad}_x(a) = ax - xa.
\end{equation}

We consider the following question: Suppose that $I$ is dense in $A$, and any derivation $D: A \to I$ is an inner derivation (implemented by some $x \in I$). Does it follow that any derivation $D: A \to A$ is an inner derivation (implemented by some $x \in A$)? 

In this paper, we show that the answer is negative in general. The hypothesis that derivations into $I$ are inner is insufficient to bound the behavior of derivations into $A$ when the ideal $I$ is "too small" to capture the full cohomology of the algebra.

\section{Definitions and Mathematical Preliminaries}

To ground our discussion, we define the primary objects of study: the Schatten ideals, the cohomological groups, and the notion of approximate identities.

\begin{definition}[Schatten $p$-Classes]
For a separable Hilbert space $H$ and $1 \leq p < \infty$, the \textit{Schatten $p$-class} $\mathcal{S}_p(H)$ is the collection of all compact operators $T \in K(H)$ such that their singular values $(s_n(T))$ are $p$-summable:
\begin{equation}
\|T\|_p = \left( \sum_{n=1}^\infty s_n(T)^p \right)^{1/p} < \infty.
\end{equation}
Special cases include the trace-class operators $\mathcal{S}_1(H)$ and the Hilbert-Schmidt operators $\mathcal{S}_2(H)$. Each $\mathcal{S}_p(H)$ is a dense ideal in $K(H)$ with respect to the uniform operator norm.
\end{definition}

\begin{definition}[Hochschild Cohomology]
For a Banach algebra $A$ and a Banach $A$-bimodule $M$, we denote by $\mathcal{H}^1(A, M)$ the \textit{first continuous cohomology group} of $A$ with coefficients in $M$. It is the quotient space of all continuous derivations from $A$ to $M$ by the subspace of inner derivations. A result of $\mathcal{H}^1(A, M) = 0$ signifies that every continuous derivation into $M$ is inner.
\end{definition}

\begin{definition}[Approximate Identity]
A net $(e_\lambda)_{\lambda \in \Lambda}$ in a Banach algebra $A$ is an \textit{approximate identity} if for every $a \in A$, $\lim_\lambda \|e_\lambda a - a\| = 0$ and $\lim_\lambda \|a e_\lambda - a\| = 0$. If the net is bounded ($\sup_\lambda \|e_\lambda\| < \infty$), it is called a \textit{bounded approximate identity (b.a.i.)}.
\end{definition}

\section{Discussion and Background}

The problem of extending inner derivations frequently arises when studying subalgebras of bounded linear operators $L(X)$ on a Banach space $X$. If $F(X)$ denotes the finite-rank operators, and $A$ is a Banach subalgebra such that $F(X) \subset A \subset L(X)$, constraints on derivations are rigid. 

As discussed in the seminal work on the cohomology of Banach algebras by B. E. Johnson (1972) \cite{4}, one might initially suspect that standard approximation arguments hold. For a derivation $D: A \to A$, if one attempts to restrict the range to a dense ideal $I$, the map need not factor through $I$. Furthermore, if one approximates $a \in A$ by a sequence $(a_n) \subset I$, the implementing elements $x_n \in I$ for the restricted derivations might fail to converge in $A$. 

The essential obstruction is one of automatic continuity and domain constraint. The condition that every derivation into \textit{every} Banach $A$-bimodule is inner is the definition of an amenable Banach algebra. However, amenability of a dense ideal $I$ does not imply the amenability of $A$. If $I$ is sufficiently small, the derivation space $Z^1(A, I)$ might be trivial or heavily restricted, making the hypothesis vacuously or easily satisfied without providing structural control over $A$.

\section{The Main Result and Counterexample}

We provide a concrete counterexample using the algebra of compact operators. Let $H$ be a separable infinite-dimensional Hilbert space. We define:
\begin{equation*}
A = K(H) \quad \text{(the algebra of compact operators)},
\end{equation*}
\begin{equation*}
I = F(H) \quad \text{(the ideal of finite-rank operators)}.
\end{equation*}

It is a standard result that $F(H)$ is a proper, dense ideal of $K(H)$ in the uniform operator topology. 

\begin{theorem} \label{thm:finite_rank}
Every derivation $D: K(H) \to F(H)$ is inner and implemented by an element in $F(H)$, but there exist derivations $D: K(H) \to K(H)$ that are not inner in $K(H)$.
\end{theorem}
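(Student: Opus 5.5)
The proof uses one standard structural fact and splits into the two claims. The fact is that every bounded derivation $D$ of $K(H)$ into itself is spatial: there is $x\in B(H)$ with $D(a)=ax-xa$ for all $a\in K(H)$. This holds because derivations of a $C^*$-algebra extend ultraweakly to its enveloping von Neumann algebra, here $K(H)''=B(H)$, and all derivations of a von Neumann algebra are inner (Kadison--Sakai). A derivation $D:K(H)\to F(H)$ is in particular a derivation into $K(H)$, so both claims reduce to statements about the commutators $[a,x]$ for a fixed $x\in B(H)$. We also use repeatedly that the commutant of $K(H)$ in $B(H)$ is $\mathbb{C}1$.

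\emph{Outer derivations into $K(H)$.} Take $x\in B(H)\setminus(K(H)+\mathbb{C}1)$. For instance, fix an orthonormal basis $(e_n)$ and let $x$ be the diagonal operator with $xe_n=(-1)^n e_n$. Since $K(H)$ is an ideal in $B(H)$, the map $\operatorname{ad}_x$ sends $K(H)$ into $K(H)$, and it is a bounded derivation. Suppose $\operatorname{ad}_x=\operatorname{ad}_y$ for some $y\in K(H)$. Then $x-y$ commutes with $K(H)$, so $x-y=\lambda 1$ for some scalar $\lambda$. This gives $x\in K(H)+\mathbb{C}1$, a contradiction. So $\operatorname{ad}_x$ is not inner in $K(H)$.

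\emph{Derivations into $F(H)$ are inner in $F(H)$.} Let $D:K(H)\to F(H)$ be a bounded derivation, and write $D=\operatorname{ad}_x$ with $x\in B(H)$ as above.

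First I obtain a uniform rank bound by a Baire category argument. Let $F_n$ be the set of operators of rank at most $n$; each $F_n$ is norm-closed. Then $K(H)=\bigcup_n D^{-1}(F_n)$ is a countable union of closed sets, so some $D^{-1}(F_N)$ contains a ball $B(a_0,r)$. For $\|a\|<r$ we have $D(a)=D(a_0+a)-D(a_0)$, so $\operatorname{rank}D(a)\le 2N$. By scaling, $\operatorname{rank}[a,x]\le 2N$ for every $a\in K(H)$.

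The remaining step, which I expect to be the main obstacle, is to show that such a uniform bound forces $x\in \mathbb{C}1+F(H)$. Assume instead that $x-\lambda 1$ has infinite rank for every $\lambda$.
\begin{enumerate}
\item By induction, build an orthonormal sequence $(\xi_k)$ such that $x\xi_k$ does not lie in $\operatorname{span}\{\xi_1,\dots,\xi_k,x\xi_1,\dots,x\xi_{k-1}\}$. Such a choice is possible because $x$ is not a scalar plus a finite-rank operator. One passes to the orthogonal complement of a finite-dimensional subspace, on which the compression of $x$ is still not scalar.
\item Let $P_M$ be the projection onto $\operatorname{span}\{\xi_1,\dots,\xi_M\}$. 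Then $[P_M,x]\xi_k=P_Mx\xi_k-x\xi_k$ for $k\le M$.
\item The vectors $x\xi_k$ are independent modulo the range of $P_M$. So the rank of $[P_M,x]$ is at least of order $M$, which contradicts the bound $2N$ once $M$ is large.
\end{enumerate}
Hence $x=\lambda 1+f$ with $f\in F(H)$. Since $\operatorname{ad}_x=\operatorname{ad}_f$ and $f\in F(H)$, the derivation $D$ is inner and implemented by an element of $F(H)$.
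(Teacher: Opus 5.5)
Your strategy works, but step (3) does not follow from the invariant you state in step (1). Take $x$ the unilateral shift ($xe_k=e_{k+1}$) and $\xi_k=e_k$. Then $x\xi_k\notin\operatorname{span}\{\xi_1,\dots,\xi_k,x\xi_1,\dots,x\xi_{k-1}\}$ for every $k$, yet $[P_M,x]=-e_{M+1}\otimes e_M^*$ has rank one for every $M$.

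You must also require $\xi_k\perp x\xi_j$ for $j<k$. Your compression argument gives this for free if the finite-dimensional subspace you pass to is $V_k=\operatorname{span}\{\xi_j,x\xi_j : j<k\}$. Indeed, let $Q$ be the projection onto $V_k^\perp$. If every unit vector $\xi\perp V_k$ satisfied $x\xi\in V_k+\mathbb{C}\xi$, then every vector of $V_k^\perp$ would be an eigenvector of $QxQ$. So $QxQ=\lambda Q$, and $x=QxQ+(1-Q)xQ+x(1-Q)\in\mathbb{C}1+F(H)$.

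With this orthogonality, step (3) goes through. Suppose $\sum_{k\le M}c_kx\xi_k\in\operatorname{span}\{\xi_1,\dots,\xi_M\}$, and let $k_0$ be the largest index with $c_{k_0}\neq0$. Pairing with $\xi_j$ for $j>k_0$ shows that the $\xi_j$-coefficients vanish for those $j$. Hence $x\xi_{k_0}\in\operatorname{span}\{\xi_1,\dots,\xi_{k_0},x\xi_1,\dots,x\xi_{k_0-1}\}$, a contradiction. So the vectors $(1-P_M)x\xi_k$, $k\le M$, are linearly independent, $\operatorname{rank}[P_M,x]\ge M$, and taking $M>2N$ contradicts the uniform bound.

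With that repair, your route is genuinely different from the paper's, and more elementary. The paper uses the Baire bound only to push the range condition from $K(H)$ to all of $B(H)$. It does this via $P_\alpha XP_\alpha\to X$ in WOT and the WOT-closedness of $\{\operatorname{rank}\le R\}$. It then applies Johnson--Parrott to get $S=c1+K$ with $K$ compact. Finally it proves $K\in F(H)$ by reducing to self-adjoint $K$ and using the compact weighted shift $T=\sum_k k^{-1}e_{n_k}\otimes e_{n_{k+1}}^*$ on eigenvectors with $\lambda_{n_k}\neq\lambda_{n_{k+1}}$, for which $[K,T]$ has infinite rank.

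You skip the extension to $B(H)$, Johnson--Parrott, the spectral theorem and the self-adjoint reduction, and you only test against finite-rank projections. The price is that your counting uses the uniform bound essentially, since each $[P_M,x]$ is finite rank regardless. The paper's last step, by contrast, needs only that a single commutator be finite rank, once compactness of $K$ is known. The paper's ``compact perturbation first, then refine'' structure is also what it reuses for $\mathcal{S}_p(H)$ in Theorem~\ref{thm:schatten}, where no rank counting is available. Your diagonal example $xe_n=(-1)^ne_n$ for the outer derivation works as well as the paper's unilateral shift.
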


\begin{proof}
By Sakai's Theorem \cite{3}, every derivation $D: K(H) \to K(H)$ is spatial and has the form $D = \operatorname{ad}_S$ for some $S \in M(K(H)) = B(H)$, where $B(H)$ is the algebra of all bounded linear operators on $H$. The element $S$ is unique modulo $\mathbb{C}I_H$.

\textit{Step 1: Extension to $B(H)$ and Johnson-Parrott.} \\
Let $D = \operatorname{ad}_S$ be a derivation mapping $K(H)$ into $F(H)$. We must first establish that $[S, X] \in F(H)$ for all bounded operators $X \in B(H)$ in order to legally utilize the Johnson-Parrott theorem. 

Define the sets $E_m = \{ T \in K(H) : \operatorname{rank}([S, T]) \le m \}$. Because the rank function is lower semi-continuous with respect to the operator norm, each $E_m$ is closed in $K(H)$. Since $[S, T] \in F(H)$ for all $T \in K(H)$, we have $\bigcup_{m=1}^\infty E_m = K(H)$. By the Baire Category Theorem, some $E_M$ contains an open ball. By translation and the scale-invariance of the rank function, there exists a uniform integer $R$ such that $\operatorname{rank}([S, T]) \le R$ for all $T \in K(H)$.

For any bounded operator $X \in B(H)$, let $(P_\alpha)$ be a net of finite-rank projections increasing strongly to $I_H$. The operators $T_\alpha = P_\alpha X P_\alpha$ belong to $K(H)$ and converge to $X$ in the weak operator topology (WOT). Consequently, $[S, T_\alpha] \to [S, X]$ in WOT. Because the property of having rank $\le R$ is preserved under WOT limits, we conclude $\operatorname{rank}([S, X]) \le R$. Thus, $[S, X] \in F(H) \subset K(H)$ for all $X \in B(H)$. 

By the Johnson-Parrott theorem (1972) \cite{1}, any bounded operator whose commutators with all bounded operators are compact must be a compact perturbation of a scalar. Since $[S, X] \in K(H)$ for all $X \in B(H)$, it immediately follows that $S \in \mathbb{C}I_H + K(H)$. Therefore, $S = cI_H + K$ for some $K \in K(H)$.

\textit{Step 2: Proving $K \in F(H)$.} \\
Since $S = cI_H + K$, the derivation is implemented by $K$, meaning $[K, T] \in F(H)$ for all $T \in K(H)$. Suppose, for the sake of contradiction, that $K \notin F(H)$. 

We may assume without loss of generality that $K$ is self-adjoint. If it were not, we could decompose it into its real and imaginary parts, $K = A + iB$, where $A, B \in K(H)$ are self-adjoint. Because $F(H)$ is a $*$-ideal, taking the adjoint of $[K, T^*] \in F(H)$ yields $-[K^*, T] \in F(H)$. Thus, both $[A, T]$ and $[B, T]$ must be in $F(H)$ for all $T \in K(H)$. Since $K \notin F(H)$, at least one of $A$ or $B$ is not finite-rank, allowing us to proceed with a self-adjoint operator.

By the spectral theorem for compact self-adjoint operators, $K$ has an orthonormal sequence of eigenvectors $(e_n)$ with corresponding real eigenvalues $\lambda_n \to 0$. Since $K$ is not finite-rank, it has infinitely many non-zero eigenvalues. We can select a subsequence of eigenvectors such that $\lambda_{n_k} \neq \lambda_{n_{k+1}}$ for all $k$. We define a compact weighted shift $T$ operating on this subsequence:
\begin{equation*}
T = \sum_{k=1}^\infty \frac{1}{k} e_{n_k} \otimes e_{n_{k+1}}^*.
\end{equation*}
Applying the commutator to the basis vector $e_{n_{k+1}}$ yields:
\begin{align*}
[K, T]e_{n_{k+1}} &= K(Te_{n_{k+1}}) - T(Ke_{n_{k+1}}) = K\left(\frac{1}{k}e_{n_k}\right) - T(\lambda_{n_{k+1}}e_{n_{k+1}}) \\
&= \frac{1}{k}\lambda_{n_k} e_{n_k} - \frac{1}{k}\lambda_{n_{k+1}}e_{n_k} = \frac{1}{k}(\lambda_{n_k} - \lambda_{n_{k+1}})e_{n_k}.
\end{align*}
Because $\lambda_{n_k} \neq \lambda_{n_{k+1}}$ for all $k$, the image of $[K, T]$ contains an infinite sequence of non-zero orthogonal vectors. Thus, $[K, T]$ has infinite rank, which contradicts the range constraint. 

Thus, $K$ must be in $F(H)$. Therefore, $D$ is implemented by an element in $I = F(H)$.

\textit{Step 3: There exist outer derivations $D: K(H) \to K(H)$.} \\
Take any $S \in B(H) \setminus (\mathbb{C}I_H + K(H))$, for example, the unilateral shift. Define $D = \operatorname{ad}_S : K(H) \to K(H)$ by $D(T) = ST - TS$. Because $K(H)$ is an ideal in $B(H)$, $ST$ and $TS$ are compact, so $D$ maps into $K(H)$.

If $D$ were inner in $K(H)$, there would exist $K \in K(H)$ such that $D = \operatorname{ad}_K$. This implies $[S - K, T] = 0$ for all $T \in K(H)$. Consequently, $S - K$ must commute with all rank-one operators, forcing $S - K = cI_H$. This implies $S = cI_H + K \in \mathbb{C}I_H + K(H)$, which contradicts our choice of $S$. Thus, $D$ is an outer derivation on $K(H)$.
\end{proof}

\section{Generalization to Schatten $p$-Classes}

The result obtained for the finite-rank operators $F(H)$ naturally motivates the question of whether a similar rigidity holds for other dense ideals, such as the Schatten $p$-classes $\mathcal{S}_p(H)$ for $1 \leq p < \infty$. We show that this topological obstruction extends smoothly into the symmetric norm ideal setting.

\begin{theorem} \label{thm:schatten}
Let $\mathcal{S}_p(H)$ be a Schatten $p$-class ideal in $K(H)$ for $1 \leq p < \infty$. Every derivation $D: K(H) \to \mathcal{S}_p(H)$ is inner and implemented by an element in $\mathcal{S}_p(H)$.
\end{theorem}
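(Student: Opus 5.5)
We follow the three-stage structure of the proof of Theorem~\ref{thm:finite_rank}, replacing the rank function by the Schatten norm $\|\cdot\|_p$. Let $D\colon K(H)\to\mathcal{S}_p(H)$ be a bounded derivation. Since $\mathcal{S}_p(H)\subset K(H)$, Sakai's theorem gives $D=\operatorname{ad}_S$ for some $S\in B(H)$, unique modulo $\mathbb{C}I_H$. We will show first that $S\in\mathbb{C}I_H+K(H)$, and then that the compact part of $S$ lies in $\mathcal{S}_p(H)$.

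\textit{Step 1: uniform $\mathcal{S}_p$-bound and extension to $B(H)$.} We do not need a Baire argument on rank sets. Consider $D$ as a map from $(K(H),\|\cdot\|)$ to $(\mathcal{S}_p(H),\|\cdot\|_p)$. Its graph is closed: if $T_n\to T$ in operator norm and $[S,T_n]\to Y$ in $\|\cdot\|_p$, then $[S,T_n]\to[S,T]$ in operator norm, and $\|\cdot\|_p$-convergence implies operator-norm convergence, so $Y=[S,T]$. The closed graph theorem therefore gives a constant $C$ with $\|[S,T]\|_p\le C\|T\|$ for all $T\in K(H)$. Now let $X\in B(H)$ and put $T_\alpha=P_\alpha XP_\alpha$, where $(P_\alpha)$ are finite-rank projections increasing strongly to $I_H$. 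Then $\|T_\alpha\|\le\|X\|$ and $[S,T_\alpha]\to[S,X]$ in WOT. The closed $\|\cdot\|_p$-ball of radius $C\|X\|$ is WOT-closed: by lower semicontinuity, $\|Y\|_p=\sup|\operatorname{tr}(YF)|$ over finite-rank $F$ with $\|F\|_{q}\le1$, and each such functional is WOT-continuous. Hence $[S,X]\in\mathcal{S}_p(H)\subset K(H)$ for every $X\in B(H)$. The Johnson--Parrott theorem then yields $S=cI_H+K$ with $K\in K(H)$, and $D=\operatorname{ad}_K$.

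\textit{Step 2: $K\in\mathcal{S}_p(H)$.} This is the main obstacle. In the finite-rank case, a single weighted shift produced a contradiction because one infinite-rank commutator suffices. Here the commutator estimate has to control the singular values of $K$ quantitatively. We reduce to self-adjoint $K$ exactly as before, using that $\mathcal{S}_p$ is a $*$-ideal and that the $p$-norm bound passes to real and imaginary parts. We may also subtract a scalar, so $K$ is diagonal, $Ke_n=\lambda_ne_n$ with $\lambda_n\to0$, and we suppose $\sum|\lambda_n|^p=\infty$. The key move is to test against a rank-one operator that couples each eigenvector to a vector where $K$ is nearly zero, rather than using shifts. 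Given $N$, choose a unit vector $f\perp e_1,\dots,e_N$ that is an eigenvector with $|\lambda_f|<\varepsilon$. Then consider the partial isometry $T_N=\sum_{n\le N}e_n\otimes g_n^*$, where the $g_n$ are orthonormal eigenvectors, disjoint from the $e_n$, whose eigenvalues are smaller than $\varepsilon$. We have $\|T_N\|=1$, and $[K,T_N]=\sum_{n\le N}(\lambda_n-\mu_n)e_n\otimes g_n^*$, whose singular values are $|\lambda_n-\mu_n|\ge|\lambda_n|-\varepsilon$. This gives $\|[K,T_N]\|_p\gtrsim(\sum_{n\le N}|\lambda_n|^p)^{1/p}$, which is unbounded in $N$ and contradicts $\|[K,T]\|_p\le C\|T\|$. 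Hence $K\in\mathcal{S}_p(H)$.

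Two cases need care in Step 2. If $K$ has finite rank, it is already in $\mathcal{S}_p$. If $\ker K$ is infinite-dimensional, we take the $g_n$ from $\ker K$. Otherwise we use the tail of eigenvectors with eigenvalues tending to zero, splitting the eigenbasis into two infinite interleaved subsequences so that the $g_n$ are available. Since $\lambda_n\to0$, $\varepsilon$ can be made small relative to the first $N$ eigenvalues by choosing the $g_n$ far enough out. I expect the bookkeeping here to be the delicate point, though it is routine.
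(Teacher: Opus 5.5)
Your proof is correct, but Step~2 takes a genuinely different route from the paper's.

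\textbf{Step 1.} This coincides with the paper's Step~1: the closed-graph bound $\|[S,T]\|_p\le C\|T\|$, approximation by $P_\alpha XP_\alpha$, and Johnson--Parrott. You improve on it by actually justifying, via trace duality against finite-rank $F$ with $\|F\|_q\le 1$, the WOT-closedness of the $\mathcal{S}_p$-ball, which the paper only asserts.

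\textbf{Step 2.} Here the two proofs diverge.

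\emph{The paper} reuses the qualitative fact that $[K,X]\in\mathcal{S}_p(H)$ for \emph{all} $X\in B(H)$. It then invokes a characterization from the commutator literature (Anderson, Weiss) to conclude $K\in\mathcal{S}_p(H)$. This is why the paper needs the extension to $B(H)$ a second time.

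\emph{You} reuse the quantitative constant $C$ instead. You test against finite-rank partial isometries $T_N=\sum_{n\le N}e_n\otimes g_n^*$ that carry far-out eigenvectors onto the leading ones. The singular values of $[K,T_N]$ are then $|\lambda_n-\mu_n|$, so their $\ell^p$-norm blows up if $\sum|\lambda_n|^p=\infty$.

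\emph{What each buys.} Your argument is elementary and self-contained, and it even yields $\|K\|_p\le 2C$. It uses only test operators in $F(H)\subset K(H)$, so the passage to $B(H)$ is needed solely to invoke Johnson--Parrott. The paper's route is shorter but outsources the decisive step to an external result.

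\textbf{Simplifying your bookkeeping.} No case distinction or interleaving is needed.
\begin{itemize}
\item Order the nonzero eigenvalues so that $|\lambda_1|\ge|\lambda_2|\ge\cdots$.
\item For each $N$, choose $M\ge N$ with $|\lambda_m|\le\frac12|\lambda_N|$ for all $m>M$, and set $g_n=e_{M+n}$.
\item Then $|\lambda_n-\mu_n|\ge\frac12|\lambda_n|$ for $n\le N$.
\item Hence $C\ge\|[K,T_N]\|_p\ge\frac12\bigl(\sum_{n\le N}|\lambda_n|^p\bigr)^{1/p}$ for every $N$.
\end{itemize}
This makes the kernel case and the interleaving unnecessary. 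You should also delete the leftover sentence about a single rank-one operator and the vector $f$, which plays no role in the argument.
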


\begin{proof}
Let $D: K(H) \to \mathcal{S}_p(H)$ be a derivation. Since $\mathcal{S}_p(H) \subset K(H)$, $D$ is trivially a derivation from $K(H)$ into $K(H)$. By the standard spatiality of derivations on $K(H)$, there exists $S \in B(H)$ such that $D = \operatorname{ad}_S$. Thus, $[S, T] \in \mathcal{S}_p(H)$ for all $T \in K(H)$.
        
\textit{Step 1: Extension to $B(H)$ and Johnson-Parrott.} \\
We view $D : K(H) \to \mathcal{S}_p(H)$ as a linear map between Banach spaces. If $T_n \to T$ in the uniform operator norm and $[S, T_n] \to Y$ in the $\mathcal{S}_p$ norm, then $[S, T_n] \to Y$ in the uniform norm (since $\|A\| \le \|A\|_p$). Because operator multiplication is continuous, $[S, T_n] \to [S, T]$ uniformly, so $Y = [S, T]$. Thus, $D$ has a closed graph. By the Closed Graph Theorem, $D$ is bounded: there exists $C > 0$ such that $\|[S, T]\|_p \le C \|T\|$ for all $T \in K(H)$.

For any $X \in B(H)$, take a net of finite-rank projections $P_\alpha \to I_H$ strongly, and let $T_\alpha = P_\alpha X P_\alpha$. Then $T_\alpha \in K(H)$, $\|T_\alpha\| \le \|X\|$, and $T_\alpha \to X$ in the weak operator topology (WOT). Consequently, $[S, T_\alpha] \to [S, X]$ in WOT. Since $\|[S, T_\alpha]\|_p \le C \|T_\alpha\| \le C \|X\|$, and the closed ball of radius $C \|X\|$ in $\mathcal{S}_p(H)$ is WOT-closed in $B(H)$, we deduce $\|[S, X]\|_p \le C \|X\|$. Thus, $[S, X] \in \mathcal{S}_p(H) \subset K(H)$ for all $X \in B(H)$. 

By the Johnson-Parrott theorem (1972) \cite{1}, any bounded operator whose commutators with all bounded operators are compact must be a compact perturbation of a scalar. Thus, $S = c I_H + K$ for some $K \in K(H)$. This resolves the spatial obstruction, showing $S$ is a compact perturbation of a scalar.
        
\textit{Step 2: Commutators of Schatten Classes.} \\
It remains to show that $K \in \mathcal{S}_p(H)$. From Step 1, we established that $[K, X] = [S, X] \in \mathcal{S}_p(H)$ for all bounded operators $X \in B(H)$. 

The structure of commutator ideals tightly constrains such elements. As demonstrated in the established literature on commutators of compact operators (see the foundational work of Anderson \cite{6} and Weiss \cite{7}), an operator $K \in K(H)$ has the property that its commutators $[K, X]$ belong to $\mathcal{S}_p(H)$ for all bounded operators $X \in B(H)$ if and only if $K$ itself belongs to $\mathcal{S}_p(H)$.

Therefore, by this characterization of Schatten class commutators, $K \in \mathcal{S}_p(H)$. This implies $S = c I_H + K \in \mathbb{C}I_H + \mathcal{S}_p(H)$. The derivation $D$ is thus implemented by an element in $\mathcal{S}_p(H)$, proving the theorem.
\end{proof}

\section{Approximation and Bounded Approximate Identities}

A central theme in Banach algebra theory is the relationship between the existence of a bounded approximate identity (b.a.i.) and the triviality of cohomology. For an algebra like $K(H)$, there exists a natural b.a.i. consisting of the sequence of finite-rank orthogonal projections $\{P_n\}_{n=1}^\infty$ onto an increasing sequence of finite-dimensional subspaces that exhaust $H$.

One might expect that if $A$ has a b.a.i. contained in a dense ideal $I$, then any derivation $D: A \to A$ could be "approximated" by derivations into $I$. Specifically, $D_n(a) = P_n D(a) P_n$ might map into $I$ and converge to $D$. However, the spatial implementation of these derivations creates an obstruction.

If $A$ were \textit{amenable}, every derivation into every dual bimodule would be inner. While $K(H)$ has a b.a.i., it is well-known that $K(H)$ is not amenable (though it is "approximately amenable" in a weaker sense). The existence of the b.a.i. ensures that $K(H)$ is "large enough" to be well-behaved locally, but the lack of amenability allows for the existence of outer derivations $D = \operatorname{ad}_S$ where $S \in B(H) \setminus (K(H) + \mathbb{C}I)$. 

Our result demonstrates that even when $A$ is "close" to being inner (having a b.a.i. and dense ideals where all derivations are inner), the global structure of the multiplier algebra $B(H)$ allows for cohomology to reappear at the level of $A$.

\section{Cohomological Interpretation}

Our findings translate immediately into the language of Hochschild cohomology for Banach algebras. For a Banach algebra $A$ and a Banach $A$-bimodule $M$, the first continuous cohomology group $\mathcal{H}^1(A, M)$ parametrizes the equivalence classes of derivations modulo the inner derivations.

The prior sections establish the following stark cohomological asymmetry:
\begin{corollary}
For $A = K(H)$ and dense ideals $I = F(H)$ or $I = \mathcal{S}_p(H)$, we have:
\begin{equation*}
\mathcal{H}^1(K(H), I) = 0, \quad \text{while} \quad \mathcal{H}^1(K(H), K(H)) \neq 0.
\end{equation*}
\end{corollary}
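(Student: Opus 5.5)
The corollary follows by assembling Theorem~\ref{thm:finite_rank} and Theorem~\ref{thm:schatten}; no new analysis is required.

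\textbf{Vanishing for $I$.} Since $I$ is an ideal of $K(H)$, it is a $K(H)$-bimodule under left and right multiplication. We give $I=\mathcal{S}_p(H)$ the norm $\|\cdot\|_p$, and $I=F(H)$ the operator norm; with the latter $F(H)$ is only a normed bimodule, which is harmless for computing the quotient of derivations by inner derivations. A continuous derivation $D:K(H)\to I$ is in particular a derivation $K(H)\to I$ in the sense of the introduction. The first clause of Theorem~\ref{thm:finite_rank} (for $F(H)$), respectively Theorem~\ref{thm:schatten} (for $\mathcal{S}_p(H)$), then gives $x\in I$ with $D=\operatorname{ad}_x$. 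Every continuous derivation into $I$ is therefore inner, so the quotient defining $\mathcal{H}^1(K(H),I)$ vanishes.

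Two remarks cover the remaining points. First, for $\mathcal{S}_p(H)$ the continuity hypothesis is automatic, by the closed-graph argument in Step~1 of the proof of Theorem~\ref{thm:schatten}. Second, $\operatorname{ad}_x$ with $x\in I$ really is an $I$-valued continuous derivation, because $\|Tx-xT\|_p\le 2\|T\|\,\|x\|_p$ and $F(H)$ is an ideal. Hence inner derivations lie in the space of continuous derivations, and the quotient is well defined.

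\textbf{Nonvanishing for $K(H)$.} Step~3 of Theorem~\ref{thm:finite_rank} applies to the unilateral shift $S$, which lies in $B(H)\setminus(\mathbb{C}I_H+K(H))$ (its image in the Calkin algebra is a non-scalar unitary). The map $\operatorname{ad}_S$ is a derivation $K(H)\to K(H)$, and it is bounded since $\|[S,T]\|\le 2\|S\|\,\|T\|$. Step~3 shows it is not of the form $\operatorname{ad}_K$ with $K\in K(H)$. Its class in $\mathcal{H}^1(K(H),K(H))$ is therefore nonzero.

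\textbf{Main obstacle.} Everything substantive is already contained in the two theorems, so the only care needed is bookkeeping. One must fix the topology on $F(H)$ so that its $\mathcal{H}^1$ makes sense. One must also confirm that the bimodule actions on $I$ are continuous: $\|TX\|_p\le\|T\|\,\|X\|_p$, and for $F(H)$ with the operator norm this is immediate.
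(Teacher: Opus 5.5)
Your proposal is correct and follows the paper's route. The vanishing is read off from the first clauses of Theorem~\ref{thm:finite_rank} and Theorem~\ref{thm:schatten}, and the non-vanishing from Step~3 of Theorem~\ref{thm:finite_rank} (the unilateral shift), while your bookkeeping (giving $F(H)$ the operator norm because it is not a Banach bimodule, continuity of the module actions, and checking that the shift lies outside $\mathbb{C}I_H+K(H)$) makes explicit points the paper leaves implicit.
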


This structural discrepancy highlights that the vanishing of the first cohomology group with coefficients in a dense sub-bimodule does not universally lift to the algebra itself. The topological obstruction fundamentally originates in the multiplier algebra $M(A)$ and the failure of the restricted ideal to contain the full spatial implementation of derivations.

\section{Formal Verification}

To ensure the strict logical rigor of the topological and algebraic obstructions detailed in this work, the primary results have been formally verified using the Lean 4 theorem prover. The machine-checked proofs rely on the foundational functional analysis and operator algebra frameworks provided by \texttt{mathlib} \cite{mathlib2020}.

The formalization confirms the spatial restrictions on derivations mapping into $F(H)$ and $\mathcal{S}_p(H)$, as well as the explicit construction of the outer derivations on $K(H)$ via the Calkin algebra and commutator bounds. The complete Lean source code, including all definitions, lemmas, and exact theorem statements corresponding to this paper, is publicly available in the accompanying repository at: \url{https://github.com/tavakkoliamirmohammad/inner-deriv-dense-lean.git}.

\section{Conclusion}

The mathematical gap observed arises because restricting the range of derivations to a "small" or tightly normed ideal, such as $F(H)$ or $\mathcal{S}_p(H)$, heavily bounds the spatial operators $S \in B(H)$ capable of implementing them. Derivations landing in these dense ideals are implemented by elements within those ideals (modulo scalars), satisfying the inner hypothesis intrinsically. Conversely, derivations landing in the full algebra $K(H)$ tap into the significantly larger multiplier algebra $B(H)$, resulting in outer derivations. Ultimately, without deep structural assumptions like bounded approximate identities or full amenability, the internal derivation structure of dense ideals fails to dictate that of the parent Banach algebra.

\end{document}